\theoremstyle{definition}
\newtheorem{definition}{Definition}
\newtheorem{example}[definition]{Example}
\newtheorem*{notation}{Notation}
\theoremstyle{remark}
\newtheorem{remark}[definition]{Remark}
\newtheorem*{idea}{Main idea}
\theoremstyle{plain}
\newtheorem{theorem}[definition]{Theorem}
\newtheorem{proposition}[definition]{Proposition}
\newtheorem{corollary}[definition]{Corollary}
\newtheorem*{theorem_intro}{Theorem}
\DeclareMathOperator{\Liminf}{lim\ inf\ }
\DeclareMathOperator{\Limsup}{lim\ sup\ }
\DeclareMathOperator{\densinf}{\underline{dens}}
\DeclareMathOperator{\denssup}{\overline{dens}}
\DeclareMathOperator{\expo}{\textnormal{e}}
\begin{document}

\title{Common upper frequent hypercyclicity}

\author{Monia Mestiri \footnote{The author is supported by a grant of F.R.S.-FNRS.}}

\date{}

\maketitle

\begin{abstract}
Considering a family of upper frequently hypercyclic operators we care about the existence of vectors which are upper frequently hypercyclic for any operator of this family. We establish sufficient conditions for a family of operators to have these vectors called common upper frequently hypercyclic vectors. Using this result, we then give a construction of such vectors. Finally we derive some applications to families of weighted shifts.
\end{abstract}

\section{Introduction}

In the last decades the notion of hypercyclicity has been subject to many investigations. An operator $T$ on a Fr\'echet space $X$ is called \textit{hypercyclic} on $X$ if there exists some vector $x \in X$ such that the set $\lbrace T^n(x) \mid n \geq 0 \rbrace$ is dense in $X.$ In this case, $x$ is called \textit{hypercyclic} for $T.$ Birkhoff has established an equivalent formulation of hypercyclicity in \cite{Bir-22}. In addition he proved that the set of hypercyclic vectors for a hypercyclic operator is a dense $G_\delta$-set. In 1969 Rolewicz has found one of the first example of hypercyclic operators: the multiples of the backward shift $\lambda B,$ with $\lambda > 1$ on $l^p.$

Later Salas raised in \cite{Sal-99} the question of the existence of vectors which are hypercyclic for any multiple of the backward shift $\lambda B,\ \lambda > 1.$ This initiated a new notion: common hypercyclicity. For an arbitrary family of hypercyclic operators $(T(\lambda))_{\lambda \in \Lambda},$ a vector is then called a \textit{common hypercyclic vector} for $(T(\lambda))_{\lambda \in \Lambda}$ if it is hypercyclic for each operator of the family. The Birkhoff theorem combined with the Baire category theorem directly implies that any countable family of hypercyclic operators has a common hypercyclic vector. Actually in this case the set of common hypercyclic vectors is a dense $G_\delta$-set. For uncountable families of operators this argument fails. Indeed even if Abakumov and Gordon have shown in \cite{AbGo-03} the existence of common hypercyclic vectors for $(\lambda B)_{\lambda > 1},$ there also exist families of operators without common hypercyclic vectors. For example Bayart and Matheron have proved in \cite{BaMa-09} that the family of hypercyclic weighted shifts on $l^2$ does not possess any common hypercyclic vector. However imposing structures on the index set, positive results have been obtained (see  \cite{BaMa-07}, \cite{CoSa-04-a}, \cite{Shk-10}). Most theorems on common hypercyclicity are based on the following generalization of the Birkhoff theorem for families of hypercyclic operators. The first part of this result is due to Saint-Raymond (see \cite{AbGo-03}, \cite{Bay-04}). The equivalence has been found by Shkarin in \cite{Shk-10}.

\begin{theorem_intro}[Shkarin \cite{Shk-10}]
Let $\Lambda$ be a $\sigma$-compact metric space, $X$ a separable Fr\'echet space and $(T(\lambda))_{\lambda \in \Lambda}$ a continuous family of operators on $X.$ Then the set of common hypercyclic vectors for $(T(\lambda))_{\lambda \in \Lambda}$ is a $G_\delta$-set. Moreover the following assertions are equivalent:
\begin{itemize}
\item[\textnormal{(i)}] the set of common hypercyclic vectors for $(T(\lambda))_{\lambda \in \Lambda}$ is a dense $G_\delta$-set;
\item[\textnormal{(ii)}] for any compact subset $K$ of $\Lambda,$ for any non-empty open subsets $U, V$ of $X,$ there exists some $x \in U$ such that
\[
\forall\ \lambda \in K,
\ \exists\ n \geq 0,
\ T^n(\lambda)(x) \in V.
\] 
\end{itemize}
\end{theorem_intro}  

The question of the existence of common hypercyclic vectors extends naturally to frequent hypercyclicity, a notion introduced in \cite{BaGr-06}. Recall that the \textit{lower density} of a subset $A$ of $\mathbb{N}_0$ is given by
\[
\densinf (A) = 
\underset{n \to \infty}{\Liminf}
\frac{\#(A \cap [0, n])}{n + 1}.
\]
On the other hand the \textit{upper density} of a subset $A$ of $\mathbb{N}_0$ is given by
\[
\denssup (A) = 
\underset{n \to \infty}{\Limsup}
\frac{\#(A \cap [0, n])}{n + 1}.
\] 
An operator $T$ on $X$ is then called \textit{frequently hypercyclic} if there exists some vector $x \in X$ such that for any non-empty open subset $U$ of $X,$ we have
\[
\densinf (\lbrace m \geq 0 \mid T^m(x) \in U \rbrace) > 0.
\]
In this case, the vector $x$ is called \textit{frequently hypercyclic} for $T.$ In contrast to hypercyclicity, Moothathu in \cite{Moo-13}, Grivaux and Matheron in \cite{GrMa-14} and Bayart and Ruzsa in \cite{BaRu-15} have proved that the set of frequently hypercyclic vectors for an operator is of first category. Therefore the methods used for common hypercyclicity fail in this case. We then care about a weaker notion introduced by Shkarin in \cite{Shk-09}, the upper frequent hypercyclicity. An operator $T$ on $X$ is called \textit{upper frequently hypercyclic} if there exists some vector $x \in X$ such that for any non-empty open subset $U$ of $X,$
\[
\denssup (\lbrace m \geq 0 \mid T^m(x) \in U \rbrace) > 0.
\]
In this case the vector $x$ is called \textit{upper frequently hypercyclic} for $T.$ The set of upper frequently hypercyclic vectors is denoted by $\mathcal{U}FHC(T).$ The properties of this set is more similar to the set of hypercyclic vectors. Indeed Bayart and Ruzsa have established in \cite{BaRu-15} that, for an upper frequently hypercyclic operator, this set is residual. Furthermore, Bonilla and Grosse-Erdmann have obtained in \cite{BoGr-16} the following theorem analogue to the Birkhoff theorem for upper frequent hypercyclicity.

\begin{theorem_intro}[Bonilla - Grosse-Erdmann \cite{BoGr-16}]
Let $X$ be a separable Fr\'echet space and $T$ an operator on $X.$ Then the following assertions are equivalent:
\begin{itemize}
\item[\textnormal{(i)}] the set of upper frequently hypercyclic vectors for $T$ is residual in $X$; 
\item[\textnormal{(ii)}] for any non-empty open subset $V$ of $X,$ there exists some $\delta > 0$ such that for any non-empty open subset $U$ of $X$ and any $N \in \mathbb{N},$ we have 
\[
\exists\ x \in U,
\ \exists\ n \geq N,
\ \frac{\#\lbrace 0 \leq m \leq n \mid T^m(x) \in V \rbrace}{n + 1} > \delta.
\]
\end{itemize}
\end{theorem_intro}

Using this result we will obtain sufficient conditions for common upper frequent hypercyclicity. This yields the following theorem which acts as the generalization of the Birkhoff theorem.

\begin{theorem}\label{thm:Basic criterion}
Let $\Lambda$ be a $\sigma$-compact metric space, $X$ a separable Fr\'echet space and $(T(\lambda))_{\lambda \in \Lambda}$ a continuous family of operators on $X.$ Suppose that for any non-empty open subset $V$ of $X$ and for any compact subset $K$ of $\Lambda$ there exists some $\delta > 0$ such that, for any non-empty open subset $U$ of $X$ and any $M \in \mathbb{N},$ we have
\[
\exists\ x \in U,
\ \forall\ \lambda \in K,
\ \exists\ n \geq M,
\ \frac{\# \lbrace 0 \leq m \leq n \mid T^m(\lambda)(x) \in V \rbrace}{n + 1}
> \delta.
\]
Then the set of common upper frequently hypercyclic vectors for $(T(\lambda))_{\lambda \in \Lambda}$ is residual in $X.$
\end{theorem}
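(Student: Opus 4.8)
The plan is to realize the set of common upper frequently hypercyclic vectors as a residual set by exhibiting a residual subset of it as a countable intersection of dense open sets, and then to invoke the Baire category theorem, which applies since a Fréchet space is a Baire space. First I would fix the combinatorial data. Since $X$ is separable, I choose a countable base $(V_j)_{j \geq 1}$ of non-empty open subsets of $X$; since $\Lambda$ is $\sigma$-compact, I write $\Lambda = \bigcup_{p \geq 1} K_p$ with each $K_p$ compact. For each pair $(j,p)$ the hypothesis, applied to $V = V_j$ and $K = K_p$, furnishes a number $\delta_{j,p} > 0$. For $n \geq 0$ I set
\[
W^{j,p}_n = \Big\{ (\lambda, x) \in \Lambda \times X : \frac{\#\{0 \leq m \leq n : T^m(\lambda)(x) \in V_j\}}{n+1} > \delta_{j,p} \Big\}.
\]
Interpreting the continuity of the family as joint continuity of $(\lambda, x) \mapsto T(\lambda)(x)$, each map $(\lambda, x) \mapsto T^m(\lambda)(x)$ is continuous by composition, so every set $\{(\lambda,x): T^m(\lambda)(x) \in V_j\}$ is open; hence $W^{j,p}_n$, being a finite union of finite intersections of such sets, is open in $\Lambda \times X$.

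Next I would introduce, for each $M \in \mathbb{N}$, the set
\[
O_{j,p,M} = \Big\{ x \in X : \forall\, \lambda \in K_p,\ \exists\, n \geq M,\ (\lambda, x) \in W^{j,p}_n \Big\}.
\]
Writing $W = \bigcup_{n \geq M} W^{j,p}_n$, an open subset of $\Lambda \times X$, we have $O_{j,p,M} = \{x : K_p \times \{x\} \subseteq W\}$. The crucial point — and the step I expect to be the main obstacle — is to show that $O_{j,p,M}$ is open; this is exactly where the compactness of $K_p$ enters, allowing one to pass from pointwise-in-$\lambda$ membership to a single neighbourhood of $x$. Concretely, for $x_0 \in O_{j,p,M}$ and each $\lambda \in K_p$ I choose a basic open box $U_\lambda \times N_\lambda \subseteq W$ containing $(\lambda, x_0)$, extract a finite subcover $U_{\lambda_1}, \ldots, U_{\lambda_r}$ of $K_p$, and put $N = \bigcap_{i=1}^{r} N_{\lambda_i}$. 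Then $N$ is an open neighbourhood of $x_0$ contained in $O_{j,p,M}$, which proves openness. Density of $O_{j,p,M}$ is immediate: given any non-empty open $U$, the hypothesis applied with $M$, $V_j$, $K_p$ and $\delta_{j,p}$ produces a point of $U$ lying in $O_{j,p,M}$.

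Finally I would set $G = \bigcap_{j,p,M} O_{j,p,M}$, a countable intersection of dense open sets, hence a residual set. It then remains to verify that every element of $G$ is a common upper frequently hypercyclic vector. Fix $x \in G$ and $\lambda \in \Lambda$, and choose $p$ with $\lambda \in K_p$. For any non-empty open $V$ pick $V_j \subseteq V$ from the base. Since $x \in O_{j,p,M}$ for every $M$, the set $\{n : (\lambda,x) \in W^{j,p}_n\}$ is unbounded, whence $\denssup(\{m : T^m(\lambda)(x) \in V_j\}) \geq \delta_{j,p} > 0$, and a fortiori $\denssup(\{m : T^m(\lambda)(x) \in V\}) > 0$. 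As $V$ was arbitrary, $x \in \mathcal{U}FHC(T(\lambda))$, and as $\lambda$ was arbitrary, $x$ is common upper frequently hypercyclic. Thus $G$ is contained in the set of common upper frequently hypercyclic vectors, which is therefore residual in $X$.
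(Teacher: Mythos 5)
Your proposal is correct and follows essentially the same route as the paper: a countable base $(V_j)$, a compact exhaustion $(K_p)$, the countable family of dense sets indexed by $(j,p,M)$, openness via joint continuity of $(\lambda,x)\mapsto T^m(\lambda)(x)$ plus a finite subcover of $K_p$, and the Baire category theorem. Your only (cosmetic) variation is to package the openness step through the open sets $W^{j,p}_n$ in $\Lambda\times X$ and a tube-lemma argument, where the paper carries out the same compactness argument by hand with explicit neighbourhoods $U_\lambda\times O_\lambda$.
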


Thanks to this result, in order to prove the existence of common upper frequently hypercyclic vectors it suffices for each non-empty open subsets $U, V$ of $X $ and for each compact subset $K$ of $\Lambda$ to create a vector of $U$ that visits frequently enough $V$ under $T(\lambda),$ for any $\lambda \in K.$ Adapting methods used for common hypercyclicity we will give a construction of such a vector in Section~2. We will then study applications to families of weighted shifts. 

\section{A generalization of the Birkhoff theorem for common upper frequent hypercyclicity}

Our results take place in the same context as common hypercyclicity criteria. In other terms we consider families of operators indexed by a $\sigma$-compact metric space. We then introduce an additional continuity for the family of operators. A family of operators $(T(\lambda))_{\lambda \in \Lambda}$ indexed by a metric space is called \textit{continuous} if for any $x \in X,$ the map $\Lambda \to X : \lambda \mapsto T(\lambda)(x)$ is continuous on $\Lambda.$ 
In this context, we have the two following results from \cite[Chapter 11]{GrPe-11}.

\begin{proposition}\label{prop:continuous_family}
Let $\Lambda$ be a metric space, $X$ a Fr\'echet space and $(T(\lambda))_{\lambda \in \Lambda}$ a continuous family of operators on $X.$ The map $\Lambda \times X \to X : (\lambda, x) \mapsto T(\lambda)(x)$ is then continuous on $\Lambda \times X$ with respect to the product topology.
\end{proposition}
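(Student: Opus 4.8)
The plan is to prove sequential continuity at an arbitrary point, which suffices since both $\Lambda$ and $X$ are metrizable and hence $\Lambda \times X$ is metrizable for the product topology. Fix $(\lambda_0, x_0) \in \Lambda \times X$ and let $(\lambda_k, x_k)_k$ be a sequence converging to $(\lambda_0, x_0)$; the goal is to show that $T(\lambda_k)(x_k) \to T(\lambda_0)(x_0)$. The natural first step is the decomposition
\[
T(\lambda_k)(x_k) - T(\lambda_0)(x_0) = T(\lambda_k)(x_k - x_0) + \bigl( T(\lambda_k)(x_0) - T(\lambda_0)(x_0) \bigr),
\]
which separates the two sources of variation: the change in the vector and the change in the parameter.

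The second bracketed term tends to $0$ immediately, since by the definition of a continuous family the map $\lambda \mapsto T(\lambda)(x_0)$ is continuous on $\Lambda$ and $\lambda_k \to \lambda_0$. The substance of the argument lies in controlling the first term $T(\lambda_k)(x_k - x_0)$, where the operator itself varies with $k$ while being applied to the null sequence $x_k - x_0 \to 0$; pointwise continuity in $\lambda$ alone does not suffice here, and some uniformity over the parameters is needed.

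For this, the key observation is that the set $K := \lbrace \lambda_k \mid k \geq 0 \rbrace \cup \lbrace \lambda_0 \rbrace$ is compact in $\Lambda$, being a convergent sequence together with its limit. I would then establish that the family $\lbrace T(\lambda) \mid \lambda \in K \rbrace$ is equicontinuous. To obtain this I rely on the uniform boundedness principle: for each fixed $x \in X$, the image $\lbrace T(\lambda)(x) \mid \lambda \in K \rbrace$ is the continuous image of the compact set $K$, hence compact, and therefore bounded in $X$. Thus the family is pointwise bounded, and since every Fr\'echet space is barrelled, Banach--Steinhaus yields the equicontinuity of $\lbrace T(\lambda) \mid \lambda \in K \rbrace$.

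Equicontinuity then closes the argument: given a neighbourhood $W$ of $0$ in $X$, there is a neighbourhood $U$ of $0$ with $T(\lambda)(U) \subseteq W$ for every $\lambda \in K$, and since $x_k - x_0 \to 0$ we have $x_k - x_0 \in U$ for $k$ large, forcing $T(\lambda_k)(x_k - x_0) \in W$. Hence the first term tends to $0$ as well, and combining the two estimates gives $T(\lambda_k)(x_k) \to T(\lambda_0)(x_0)$. I expect the equicontinuity step to be the main obstacle, as it is the only place where the completeness of $X$ is genuinely used, through the barrelledness that powers the uniform boundedness principle; the surrounding estimates are routine once equicontinuity is available.
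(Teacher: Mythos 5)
Your argument is correct; note that the paper itself offers no proof of this proposition, citing it from \cite[Chapter 11]{GrPe-11}, and your reduction to sequential continuity, the compactness of $\lbrace \lambda_k \mid k \geq 0\rbrace \cup \lbrace\lambda_0\rbrace$, and the Banach--Steinhaus/equicontinuity step constitute exactly the standard proof found there. Nothing further is needed.
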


\begin{corollary}\label{coro:continuous_family}
Let $\Lambda$ be a metric space, $X$ a Fr\'echet space and $(T(\lambda))_{\lambda \in \Lambda}$ a continuous family of operators on $X.$ Then for any $n \geq 0,$ the family of operators $(T^n(\lambda))_{\lambda \in \Lambda}$ is continuous.
\end{corollary}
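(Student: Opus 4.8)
The plan is to argue by induction on $n$, using the definition of a continuous family together with the joint continuity supplied by Proposition~\ref{prop:continuous_family}. Recall that continuity of the family $(T^n(\lambda))_{\lambda \in \Lambda}$ means precisely that for every fixed $x \in X$ the orbit map $\Lambda \to X : \lambda \mapsto T^n(\lambda)(x)$ is continuous. The base case $n = 0$ is immediate, since $T^0(\lambda) = \mathrm{Id}$ makes $\lambda \mapsto x$ a constant map, and the case $n = 1$ is exactly the hypothesis that the family is continuous.

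For the inductive step I would assume that $(T^n(\lambda))_{\lambda \in \Lambda}$ is continuous and deduce the same for $(T^{n+1}(\lambda))_{\lambda \in \Lambda}$. Fix $x \in X$ and write $T^{n+1}(\lambda)(x) = T(\lambda)\bigl(T^n(\lambda)(x)\bigr)$. The difficulty here, and the only genuine subtlety, is that $\lambda$ appears in two places at once: both in the operator $T(\lambda)$ and in its argument $T^n(\lambda)(x)$. A naive estimate that freezes one occurrence of $\lambda$ while varying the other does not suffice; one must control both simultaneously.

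This is exactly what Proposition~\ref{prop:continuous_family} resolves. The plan is to realize $\lambda \mapsto T^{n+1}(\lambda)(x)$ as a composition of continuous maps. Consider $\Phi : \Lambda \to \Lambda \times X$ defined by $\Phi(\lambda) = \bigl(\lambda, T^n(\lambda)(x)\bigr)$. Its first coordinate is the identity of $\Lambda$ and its second coordinate is $\lambda \mapsto T^n(\lambda)(x)$, which is continuous by the inductive hypothesis; hence $\Phi$ is continuous into the product $\Lambda \times X$ equipped with the product topology. Composing $\Phi$ with the map $(\lambda, y) \mapsto T(\lambda)(y)$, continuous on $\Lambda \times X$ by Proposition~\ref{prop:continuous_family}, yields precisely $\lambda \mapsto T(\lambda)\bigl(T^n(\lambda)(x)\bigr) = T^{n+1}(\lambda)(x)$. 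As a composition of continuous maps this is continuous, which closes the induction and proves the corollary.
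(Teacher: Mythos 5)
Your proof is correct: the induction on $n$, combined with factoring $\lambda \mapsto T^{n+1}(\lambda)(x)$ through the jointly continuous map of Proposition~\ref{prop:continuous_family} via $\lambda \mapsto (\lambda, T^n(\lambda)(x))$, is exactly the standard argument. The paper itself gives no proof, deferring to \cite[Chapter 11]{GrPe-11}, and the argument there is essentially the one you wrote.
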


As announced in the introduction we start with Theorem \ref{thm:Basic criterion} which acts as the generalization of the Birkhoff theorem for common hypercyclicity. Before stating it we have to introduce the following notation.

\begin{notation}
Let $X$ be a Fr\'echet space, $(T(\lambda))_{\lambda \in \Lambda}$ a family of operators on $X$ and $V$ an open subset of $X.$ For each $\lambda \in \Lambda$ and each $x \in X,$ we denote by $N_\lambda(x, V)$ the set given by
\[
N_\lambda(x, V) :=
\lbrace
n \geq 0
\mid
T^n(\lambda)(x) \in V
\rbrace.
\]
This is the set of visiting times for $x$ in $V$ under the operator $T(\lambda)$.
\end{notation} 

\begin{proof}[Proof of Theorem \ref{thm:Basic criterion}]
By separability of $X$ there exists some countable base $(V_k)_{k \geq 1}$  of non-empty open subsets of $X.$ Moreover the $\sigma$-compactness of $\Lambda$ ensures the existence of a sequence $(K_m)_{m \geq 1}$ of compact subsets of $\Lambda$ such that
\begin{equation}
\Lambda = \underset{m \geq 1}{\bigcup} K_m.
\label{eq:Basic_criterion_1}
\end{equation}
From the hypotheses we deduce that for any $k, m \geq 1$ there exists some $\delta_{k, m} > 0$ such that for any $M \geq 1$ and any non-empty open subset $U$ of $X,$
\[
\exists\ x \in U,
\ \forall\ \lambda \in K_m,
\ \exists\ n \geq M,
\ \frac{\# (N_\lambda(x, V_k) \cap [0, n])}{n + 1} > \delta_{k, m}.
\]
For any $k, m \geq 1$ and any $M \geq 1,$ we then define the set $E(k, m, M)$ as
\[
E(k, m, M)
:= \Big\lbrace
x \in X
\ \big\vert \ 
\forall\, \lambda \in K_m,
\, \exists\ n \geq M,
\, \frac{\# (N_\lambda(x, V_k) \cap [0, n])}{n + 1} > \delta_{k, m}
\Big\rbrace.
\]
By the above these sets are clearly dense in $X.$ So in order to prove the claim it is sufficient by the Baire category theorem to show the following assertions:
\begin{itemize}
\item[(a)] $\underset{k, m \geq 1}{\bigcap\,} \underset{M \geq 1}{\bigcap}
E(k, m, M) \subset \underset{\lambda \in \Lambda}{\bigcap} \mathcal{U}FHC(T(\lambda));$
\item[(b)] for any $k, m \geq 1$ and any $M \geq 1,$ the set $E(k, m, M)$ is open.
\end{itemize}
We begin by the proof of (a). Let $x \in X$ be such that
\[
x \in 
\underset{k, m \geq 1}{\bigcap\,} \underset{M \geq 1}{\bigcap}
E(k, m, M).
\]
This then implies that
\[
\forall\ k \geq 1,
\ \forall\ m \geq 1,
\ \forall\ \lambda \in K_m,
\ \denssup (N_\lambda(x, V_k)) > 0.
\]
Since $(K_m)_{m \geq 1}$ covers $\Lambda,$ we obtain that
\[
\forall\ k \geq 1,
\ \forall\ \lambda \in \Lambda,
\ \denssup (N_\lambda(x, V_k)) > 0.
\]
Moreover $(V_k)_{k \geq 1}$ is a base of non-empty open subsets of $X.$ Therefore the previous assertion entails that $x$ is a common upper frequently hypercyclic vector for the family $(T(\lambda))_{\lambda \in \Lambda},$ which proves (a).  So it remains to show (b). Let $k, m, M \geq 1$ and $x \in E(k, m, M).$ By definition we then obtain the existence of a family of positive integers  $(n_\lambda)_{\lambda \in K_m}$ such that
\begin{equation}
\forall\ \lambda \in K_m,
\ n_\lambda \geq M \text{ and }
\ \frac{\# (N_\lambda(x, V_k) \cap [0, n_\lambda])}{n_\lambda + 1}
> \delta_{k, m}.
\label{eq:Basic_criterion_2}
\end{equation}
We must verify that there exists some open neighbourhood $O$ of $x$ such that $O \subset E(k, m, M).$ First we fix $\lambda \in K_m.$ For any $n \in N_\lambda(x, V_k),\ V_k$ is an open neighbourhood of $T^n(\lambda)(x).$ Moreover from Proposition \ref{prop:continuous_family} and Corollary \ref{coro:continuous_family}, we deduce that for each $n \geq 0,$ the map $\Lambda \times X \to X : (\mu, y) \mapsto T^n(\mu)(y)$ is continuous at $(\lambda, x).$ This then implies that for any $n \in N_\lambda(x, V_k),$ there exists some open neighbourhood $U_{n, \lambda}$ of $\lambda$ and some open neighbourhood $O_{n, \lambda}$ of $x$ such that
\[
\forall\ \mu \in U_{n, \lambda},
\ \forall\ y \in O_{n, \lambda},
\ T^n(\mu)(y) \in V_k.
\]
As the set $N_\lambda(x, V_k) \cap [0, n_\lambda]$ is finite, it follows the existence of an open neighbourhood $U_\lambda$ of $\lambda$ and an open neighbourhood $O_\lambda$ of $x$ such that
\[
\forall\ n \in N_\lambda(x, V_k) \cap [0, n_\lambda],
\ \forall\ \mu \in U_\lambda,
\ \forall\ y \in O_\lambda,
\ T^n(\mu)(y) \in V_k.
\]
Altogether we have obtained that for any $\lambda \in K_m,$ there exists some open neighbourhood $U_\lambda$ of $\lambda$ and some open neighbourhood $O_\lambda$ of $x$ such that  
\begin{equation}
\forall\ \mu \in U_\lambda,
\ \forall\ y \in O_\lambda,
\ N_\lambda(x, V_k) \cap [0, n_\lambda]
\subset N_\mu(y, V_k).
\label{eq:Basic_criterion_3}
\end{equation}
In particular $(U_\lambda)_{\lambda \in K_m}$ is a family of open subsets of $\Lambda$ whose union contains $K_m.$ By compactness of $K_m,$ there then exists some $I \geq 1$ and some $\lambda_1, \ldots , \lambda_I \in K_m$ such that
\[
K_m
\subset \underset{1 \leq i \leq I}{\bigcup}
U_{\lambda_i}.
\]
We finally take $O$ to be the open neighbourhood of $x$ given by
\[
O := \underset{1 \leq i \leq I}{\bigcap}
O_{\lambda_i}.
\]
By \eqref{eq:Basic_criterion_3} we then obtain that
\[
\forall\ y \in O,
\ \forall\ \mu \in K_m,
\ \exists\ 1 \leq i \leq I,
\ N_{\lambda_i}(x, V_k) \cap [0, n_{\lambda_i}]
\subset N_\mu(y, V_k).
\]
On the other hand we deduce from \eqref{eq:Basic_criterion_2} that
\[
\forall\ 1 \leq i \leq I,
\ n_{\lambda_i} \geq M
\text{ and }
\ \frac{\# (N_{\lambda_i}(x, V_k) \cap [0, n_{\lambda_i}])}{n_{\lambda_i} + 1}
> \delta_{k, m}.
\]
Combining these assertions we conclude that
\[
\forall\ y \in O,
\ \forall\ \mu \in K_m,
\ \exists\ n > M,
\ \frac{\# (N_\mu(y, V_k) \cap [0, n])}{n + 1}
> \delta_{k, m}.
\]
Therefore the assertion (b) is satisfied. Applying the Baire category theorem we end the proof.
\end{proof}

\section{Common upper frequent hypercyclicity criterion}

The previous theorem gives sufficient conditions to have common upper frequently hypercyclic vectors. In this section we will use this result to establish a theorem in the same vein as the common hypercyclicity theorem from \cite[Chapter 11, Theorem 11.9]{GrPe-11}. Before stating this theorem we have to recall the definition of the unconditional and uniform convergence of a family of series. To this end we will work with an $F$-norm on $X.$  This notion can be found in \cite{KPR-84}.

\begin{definition}
Let $X$ be a vector space. An $F$\textit{-norm} on $X$ is a functional $\Vert \_ \Vert: X \to [0, + \infty[\,$ such that
\begin{itemize}
\item[(i)] for any $x, y \in X,\ \Vert x + y \Vert \leq \Vert x \Vert + \Vert y \Vert;$
\item[(ii)] for any scalar $\lambda$ and any $x \in X,$ if $\vert \lambda \vert \leq 1$ then $\Vert \lambda x \Vert \leq \Vert x \Vert;$ 
\item[(iii)] for any $x \in X,\ \underset{\lambda \to 0}{\lim} \Vert \lambda x \Vert = 0;$ 
\item[(iv)] for any $x \in X,$ if $\Vert x \Vert = 0$ then $x = 0.$ 
\end{itemize}
\end{definition}  

Considering a Fr\'echet space $X$ endowed with a separating increasing sequence of seminorms $(p_k)_{k \geq 1},$ we denote by $\Vert \_ \Vert$ the $F$-norm on $X$ defined, for $x \in X,$ by
\[
\forall\ x \in X,
\ \Vert x \Vert
:= \underset{k \geq 1}{\sum} \frac{1}{2^k} \min (1, p_k(x)). 
\]
This $F$-norm induces the topology of $X$ and allows to work with a norm-like functional.

\begin{definition}
Let $X$ be a Fr\'echet space and $(x_{\lambda, n})_{(\lambda, n) \in \Lambda \times \mathbb{N}}$ a family of vectors of $X.$ We say that the series $\underset{n \geq 1}{\sum} x_{\lambda, n}$ converges \textit{unconditionally and uniformly} for $\lambda \in \Lambda$ if for any $\varepsilon > 0,$ there exists some $N_0 \in \mathbb{N}$ such that 
\[
\text{for any } \lambda \in \Lambda \text{ and any finite set } F \subset \lbrace N_0, N_0 + 1, \ldots \rbrace,
\ \Big\Vert \underset{n \in F}{\sum} x_{\lambda, n} \Big\Vert < \varepsilon.
\]
\end{definition} 

For the application to families of weighted shifts we will use the following sufficient conditions for uniform unconditional convergence from \cite[Chapter 11]{GrPe-11}.

\begin{remark}\label{rem}
Let $X$ be a Fr\'echet space, $(e_n)_{n \geq 1}$ a sequence of vectors of $X$ and $(a_{\lambda, n})_{(\lambda, n) \in \Lambda \times \mathbb{N}}$ a family of scalars. Suppose that there exists some sequence of positive numbers $(c_n)_{n \in \mathbb{N}}$ such that
\begin{itemize}
\item[-] for any $\lambda \in \Lambda$ and any $n \geq 1,
\ \vert a_{\lambda, n} \vert \leq c_n;$
\item[-] the series $\underset{n \geq 1}{\sum} c_n e_n$ converges unconditionally.
\end{itemize}
Then the series $\underset{n \geq 1}{\sum} a_{\lambda, n} e_n$ converges unconditionally and uniformly for $\lambda \in \Lambda.$
\end{remark}

\begin{theorem}[Common upper frequent hypercyclicity criterion]\label{thm:Common_UFHC}
Let $\Lambda$ be a real interval, $X$ a separable Fr\'echet space and $(T(\lambda))_{\lambda \in \Lambda}$ a continuous family of operators on $X.$ Suppose that for any compact subinterval $K$ of $\Lambda,$ there exists some dense subset $X_0$ of $X$ and maps $S_n(\lambda) : X_0 \to X,\ n \geq 0,\ \lambda \in K$ such that for any $x \in X_0,$
\begin{itemize}
\item[\textnormal{(i)}] the series $\underset{n = 0}{\overset{m}{\sum}} T^m(\lambda)(S_{m - n}(\mu_n)(x))$ converges unconditionally and uniformly for $m \geq 0,\ \mu_0 \geq \ldots \geq \mu_m \in K$ and $\lambda \in K;$  
\item[\textnormal{(ii)}] the series $\underset{n \geq 0}{\sum} T^m(\lambda)(S_{m + n}(\mu_n)(x))$ converges unconditionally and uniformly for $m \geq 0, (\mu_n)_{n \geq 0}$ any non-decreasing sequence from $K$ and $\lambda \leq \mu_0 \in K;$ 
\item[\textnormal{(iii)}] for any $\varepsilon > 0,$ there exists some decreasing sequence $(d_n)_{n \geq 1}$ of positive numbers such that
\begin{itemize}
\item[\textnormal{(a)}] for any $n \geq 1$ and any $\lambda, \mu \in K,$
\[
\text{if }
0 \leq \mu - \lambda \leq d_n
\text{ then }
\Vert T^n(\lambda)(S_n(\mu)(x)) - x \Vert \leq \varepsilon;
\]
\item[\textnormal{(b)}] for any $c \in \mathbb{N},$ the series $\underset{t \geq 1}{\sum} d_{c^t}$ diverges;
\end{itemize} 
\item[\textnormal{(iv)}] $(T^n(\lambda)(x))_{n \geq 0}$ converges uniformly to $0$ for $\lambda \in K.$
\end{itemize} 
Then the set of common upper frequently hypercyclic vectors for $(T(\lambda))_{\lambda \in \Lambda}$ is residual in $X,$ and in particular, non-empty.
\end{theorem}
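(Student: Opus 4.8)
The plan is to verify the hypothesis of Theorem~\ref{thm:Basic criterion}, since that criterion already packages the Baire-category machinery. So I fix a non-empty open subset $V$ of $X$ and a compact subset $K$ of $\Lambda$; because $\Lambda$ is a real interval I may assume $K$ is a compact subinterval, which lets me invoke the maps $S_n(\lambda)$ and the dense set $X_0$. Shrinking $V$, I may assume $V$ is a ball $\{y : \Vert y - z \Vert < \varepsilon\}$ around some $z \in X_0$. The goal is to produce, for every non-empty open $U$ and every $M \in \mathbb{N}$, a single vector $x \in U$ whose orbit under \emph{every} $T(\lambda)$, $\lambda \in K$, hits $V$ with upper density exceeding a $\delta$ depending only on $V$ and $K$. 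The natural candidate, following the classical construction behind \cite[Chapter~11, Theorem~11.9]{GrPe-11}, is
\[
x = u + \sum_{j} S_{n_j}(\lambda_j)(z),
\]
where $u \in U \cap X_0$ is a fixed starting vector, the $n_j$ are carefully spaced integers, and the $\lambda_j$ run through a discretization of $K$. The series converges by hypotheses (i)--(ii) combined with Remark~\ref{rem}, and its tail is small, so $x \in U$ for an appropriate choice of parameters.

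The heart of the matter is the density estimate. The idea is that when I apply $T^{n_j}(\lambda)$ for $\lambda$ close to the discretization point $\lambda_j$, the corresponding term $T^{n_j}(\lambda)(S_{n_j}(\lambda_j)(z))$ is within $\varepsilon$ of $z$ by hypothesis (iii)(a), provided $0 \le \lambda_j - \lambda \le d_{n_j}$. The remaining terms split into a ``past'' part $\sum_{i<j} T^{n_j}(\lambda)(S_{n_j - n_i}(\lambda_i)(z))$, controlled by the unconditional-uniform convergence in (i) since the indices $\lambda_i$ are non-decreasing, and a ``future'' part $\sum_{i>j} T^{n_j}(\lambda)(S_{n_i - n_j}(\lambda_i)(z))$, controlled by (ii). The contribution of $T^{n_j}(\lambda)(u)$ is handled by (iv). Thus at each such time $n_j$ the orbit point $T^{n_j}(\lambda)(x)$ lands inside $V$. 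So for a fixed $\lambda$, I need the integers $n_j$ for which $\lambda$ lies in the appropriate window $[\lambda_j - d_{n_j}, \lambda_j]$ to be sufficiently frequent to force positive upper density.

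The combinatorial obstacle---and I expect this to be the main difficulty---is arranging the discretization of $K$ together with the spacing of the $n_j$ so that \emph{every} $\lambda \in K$ is caught at a positive upper density of times, uniformly. This is exactly where condition (iii)(b), the divergence of $\sum_t d_{c^t}$, enters: by choosing the $n_j$ along a geometric-type scale $c^t$ and letting the discretization points $\lambda_j$ sweep across $K$ with step sizes matched to $d_{n_j}$, the divergent series guarantees that the windows $[\lambda_j - d_{n_j}, \lambda_j]$ cover $K$ infinitely often, so that no $\lambda$ is missed and each is revisited with a frequency bounded below. Balancing the window widths against the block lengths is what yields a $\delta$ depending only on $K$ and $\varepsilon$ (hence on $V$), not on $U$ or $M$; one absorbs $M$ by simply starting the construction with indices $n_j \ge M$.

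Once these parameters are fixed, the verification that $x \in U$ and that $\#(N_\lambda(x,V) \cap [0,n])/(n+1) > \delta$ for arbitrarily large $n$ and all $\lambda \in K$ is routine estimation using the $F$-norm triangle inequality and the uniform smallness of tails from (i)--(ii) and (iv). This establishes precisely the hypothesis of Theorem~\ref{thm:Basic criterion}, from which residuality of the set of common upper frequently hypercyclic vectors follows immediately, and non-emptiness is then a consequence of the Baire category theorem in the Fr\'echet space $X$.
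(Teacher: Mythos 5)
Your overall architecture matches the paper's: reduce to Theorem~\ref{thm:Basic criterion}, build a vector of the form $u+\sum S_{n}(\lambda)(z)$, control the cross-terms by (i), (ii), (iv) and the diagonal term by (iii)(a), and let the divergence in (iii)(b) drive a discretization of $K$. But the key quantitative step --- where the uniform $\delta$ comes from --- is not supplied, and the mechanism you describe in its place would not work. First, you aim at the wrong target: you want the constructed vector to visit $V$ with positive upper density for \emph{every} $\lambda\in K$ (``caught at a positive upper density of times'', ``windows cover $K$ infinitely often'', ``for arbitrarily large $n$''). Theorem~\ref{thm:Basic criterion} only asks, for each fixed $M$, for some $x\in U$ (allowed to depend on $M$) such that for each $\lambda$ there is a \emph{single} $n\ge M$ with $\#(N_\lambda(x,V)\cap[0,n])/(n+1)>\delta$. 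A finite sweep of $K$ delivers only finitely many guaranteed visits per $\lambda$, hence upper density $0$ for the constructed vector itself; genuine positive upper density for all $\lambda$ simultaneously is essentially the conclusion of the theorem and cannot be extracted this directly. Second, with one perturbation $S_{n_j}(\lambda_j)(z)$ per discretization point, a given $\lambda$ is caught at one time $n_j$, and the ratio $1/(n_j+1)$ tends to $0$ along the sweep, so no $\delta$ independent of $U$ and $M$ emerges.

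The paper's fix is a block structure: after extracting from (i)--(ii) a gap $s_0$ beyond which all cross-terms contribute less than $\varepsilon/4$, each discretization point $\lambda_t$ receives $l_t+1$ perturbations $S_{l_t+ls_0}(\lambda_t)(y)$, $0\le l\le l_t$, with $l_t=c^t$ and $c=\max(N_0,2+s_0,M)$. For $\lambda\in[\lambda_{t-1},\lambda_t]$ all $l_t+1$ times $l_t+ls_0$ are visits (since $\lambda_t-\lambda\le d_{c^{t+1}}\le d_{l_t+ls_0}$ by monotonicity of $(d_n)$), so already at the single scale $n=l_t(1+s_0)$ the ratio exceeds $1/(2+s_0)=:\delta$, which depends only on $V$ and $K$. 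The role of (iii)(b) is then only to guarantee that the subdivision with steps $d_{c^{t+1}}$, matched to the geometric growth of the block start times, reaches $b$ after finitely many steps --- a single sweep, not infinitely many. This proportionality between the number of visits in a block and the block's end time is the essential new idea relative to the common hypercyclicity criterion of Grosse-Erdmann and Peris, and it is the piece your sketch is missing.
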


Remark that the finite sums of the hypothesis (i) are regarded as infinite series by adding $0$ terms.

Let us explain the main idea of this result before proving it.

\begin{idea}
In view of Theorem \ref{thm:Basic criterion}, in order to obtain common upper frequent hypercyclicity it is sufficient to construct a vector $z$ from an open subset $U$ which visits frequently enough $V$ under $T(\lambda_t),$ for some $\lambda_0 < \ldots < \lambda_\tau \in \Lambda.$ We create this vector by blocks. The first block will ensure that $z \in U.$ Then in order to visit $V$ we will add perturbations of a vector $y$ in $V.$ More specifically we take $z$ to be of the following form:
\[
z := x +
\underset{t = 1}{\overset{\tau}{\sum}}
(\underset{
\text{to visit } V \text{ under } T(\lambda_t) \ L_t + 1 \text{ times}
}
{
\underbrace{
S_{l_t}(\lambda_t)(y)
+ S_{l_t + s_0}(\lambda_t)(y)
+ \ldots + S_{l_t + L_t s_0}(\lambda_t)(y)
}
}),
\]
where $x \in U$ and for any $t \in \lbrace 1, \ldots, \tau \rbrace,$ we denote by
\begin{itemize}
\item[-] $s_0$ the \textit{gap} between two successive approximations in each block; 
\item[-] $l_t$ the \textit{beginning} of the block for $\lambda_t;$ 
\item[-] $L_t + 1$ the \textit{number of visits} in the block for $\lambda_t.$
\end{itemize}
Notice that we only consider the visiting times with a constant gap. This is a technical necessity. In the proof we will choose $(L_t)_{t \geq 1} := (l_t)_{t \geq 1}$.
\end{idea}

As announced this result is similar to common hypercyclicity theorem from \cite[Chapter 11, Theorem 11.9]{GrPe-11}. The main difference between these theorems is the third hypothesis. For common hypercyclicity we only ask the divergence of the series $\underset{n \geq 1}{\sum} d_n.$ Here we must have the divergence of the series but in a specific way. We select a particular subsequence of $(d_n)_{n \geq 1}.$ This allows to have enough times between approximations for two successive parameters $\lambda.$

\begin{proof}
In order to obtain the claim we will apply Theorem \ref{thm:Basic criterion}. To this end we have to show that the family $(T(\lambda))_{\lambda \in \Lambda}$ satisfies the hypotheses of this result. Let $K$ be a compact subset of $\Lambda.$ We can assume without loss of generality that $K := [a, b]$ is a subinterval of $\Lambda.$ From the hypotheses it then follows the existence of a dense subset $X_0$ of $X$ and maps $S_n(\lambda) : X_0 \to X,\ n \geq 0,\ \lambda \in K$ such that the assertions (i), (ii), (iii) and (iv) are satisfied. Let $V$ be a non-empty open subset of $X.$ Since $X_0$ is dense in $X$ we can suppose that $V := B(y, \varepsilon)$ with some $y \in X_0$ and some $\varepsilon > 0.$  By (iii) there exists some decreasing sequence $(d_n)_{n \geq 1}$ of positive numbers such that
\begin{itemize}
\item[\textnormal{(a)}] for any $n \geq 1$ and any $\lambda, \mu \in K,$
\[
\text{if }
0 \leq \mu - \lambda \leq d_n
\text{ then }
\Vert T^n(\lambda)(S_n(\mu)(y)) - y \Vert \leq \frac{\varepsilon}{4};
\]
\item[\textnormal{(b)}] for any $c \in \mathbb{N},$ the series $\underset{t \geq 1}{\sum} d_{c^t}$ diverges.
\end{itemize}
On the other hand we obtain by (i) and (ii) the existence of a positive integer $s_0$ such that for any finite subset $F$ of $\lbrace s_0, s_0 + 1, \ldots \rbrace$ and any $m \geq 0$ we have that
\begin{itemize}
\item[(I)] for any $\mu_0 \geq \ldots \geq \mu_m \in [a, b]$ and any $\lambda \in [a, b],$
\[
\Big\Vert
\underset{n \in F \cap [0, m]}{\sum}
 T^m(\lambda)(S_{m - n}(\mu_n)(y))
\Big\Vert
< \frac{\varepsilon}{4};
\]
\item[(II)] for any non-decreasing sequence $(\mu_n)_{n \geq 0}$ from $[a, b]$ and any $\lambda \leq \mu_0 \in [a, b],$
\[
\Big\Vert
\underset{n \in F}{\sum}
 T^m(\lambda)(S_{m + n}(\mu_n)(y))
\Big\Vert
< \frac{\varepsilon}{4}.
\]
\end{itemize}
We then take $\delta : = \frac{1}{2 + s_0} > 0.$ Let $U$ be a non-empty open subset of $X$ and $M \geq 1.$ Once again, as $X_0$ is dense in $X$ we can assume that $U := B(x, r)$ with some $x \in X_0$ and some $r > 0.$ From (ii) with $m = 0$ and (iv) we deduce the existence of a positive integer $N_0$ such that for any finite subset $F$ of  $\lbrace N_0, N_0 + 1 \ldots \rbrace,$
\begin{itemize}
\item[(III)] for any non-decreasing sequence $(\mu_n)_{n \geq 0}$ from $[a, b],$
\[
\Big\Vert
\underset{n \in F}{\sum}
 S_n(\mu_n)(y)
\Big\Vert
< r;
\]
\item[(IV)] for any $n \geq N_0$ and any $\lambda \in [a, b],
\ \Vert T^n(\lambda)(x) \Vert < \frac{\varepsilon}{4}.$
\end{itemize}
Furthermore by taking $c := \max (N_0, 2 + s_0, M),$ the assertion (b) ensures that the series
\[
\underset{t \geq 1}{\sum} d_{c^{t + 1}} \text{ diverges}.
\]
Therefore there exists some integer $\tau \geq 1$ such that
\[
a + \underset{t = 1}{\overset{\tau - 1}{\sum}} d_{c^{t + 1}}
\leq b 
< a + \underset{t = 1}{\overset{\tau}{\sum}} d_{c^{t + 1}}.
\]
So we obtain the subdivision $(\lambda_t)_{0 \leq t \leq \tau}$ of $[a, b]$ defined by
\[
\lambda_\tau := b
\text{ and for any } 0 \leq t \leq \tau - 1,
\ \lambda_t := a + \underset{s = 1}{\overset{t}{\sum}} d_{c^{s + 1}}. 
\]
We finally consider the sequence $(l_t)_{t \geq 1} := (c^t)_{t \geq 1}$ and the vector $z$ given by
\[
z := x +
\underset{t = 1}{\overset{\tau}{\sum\,}}
 \underset{l = 0}{\overset{l_t}{\sum}}
  S_{l_t + l s_0}(\lambda_t)(y).
\]
We observe that for any $ 1 \leq t \leq \tau,\ l_t + l_t s_0 < l_{t + 1},$ which implies that the integers $l_t + l s_0,$ with $0 \leq l \leq l_t$ and $t \geq 1$ are pairwise distinct. Combining this with (III) this entails that
\[
\Vert z - x \Vert
= \Big\Vert
\underset{t = 1}{\overset{\tau}{\sum\,}}
 \underset{l = 0}{\overset{l_t}{\sum}}
  S_{l_t + l s_0}(\lambda_t)(y)
\Big\Vert
< r.
\]
Moreover, considering $1 \leq t \leq \tau,\ \lambda \in [\lambda_{t - 1}, \lambda_t]$ and $0 \leq l \leq l_t,$ we have that
\begin{align*}
T^{l_t + l s_0}(\lambda)(z)
& = T^{l_t + l s_0}(\lambda)(x)
+ \underset{s = 1}{\overset{t - 1}{\sum}}
 \underset{k = 0}{\overset{l_s}{\sum\,}}
 T^{l_t + l s_0}(\lambda)(S_{l_s + k s_0}(\lambda_s)(y))\\
+ \underset{k = 0}{\overset{l - 1}{\sum\,}} 
& T^{l_t + l s_0}(\lambda)(S_{l_t + k s_0}(\lambda_t)(y))
+ T^{l_t + l s_0}(\lambda)(S_{l_t + l s_0}(\lambda_t)(y))\\
+ \underset{k = l + 1}{\overset{l_t}{\sum}}
& T^{l_t + l s_0}(\lambda)(S_{l_t + k s_0}(\lambda_t)(y))
+ \underset{s = t + 1}{\overset{\tau}{\sum}}
\underset{k = 0}{\overset{l_s}{\sum\,}}
 T^{l_t + l s_0}(\lambda)(S_{l_s + k s_0}(\lambda_s)(y)),
\end{align*}
which may be written equivalently as
\begin{align*}
T^{l_t + l s_0}(\lambda)(z)
& = T^{l_t + l s_0}(\lambda)(x)
+ T^{l_t + l s_0}(\lambda)(S_{l_t + l s_0}(\lambda_t)(y))\\
& + \bigg(\underset{s = 1}{\overset{t - 1}{\sum}}
\underset{k = 0}{\overset{l_s}{\sum\,}}
 T^{l_t + l s_0}(\lambda)(S_{l_t + l s_0 - (l_t + l s_0 - l_s - k s_0)}(\lambda_s)(y))\\
& \hspace*{5ex} + \underset{k = 0}{\overset{l - 1}{\sum\,}} 
 T^{l_t + l s_0}(\lambda)(S_{l_t + l s_0 - (l_t + l s_0 - l_t - k s_0)}(\lambda_t)(y)) \bigg)\\
& + \bigg( \underset{k = l + 1}{\overset{l_t}{\sum}}
 T^{l_t + l s_0}(\lambda)(S_{l_t + l s_0 + (l_t + k s_0 - l_t - l s_0)}(\lambda_t)(y))\\
& \hspace*{5ex} + \underset{s = t + 1}{\overset{\tau}{\sum}}
\underset{k = 0}{\overset{l_s}{\sum\,}}
 T^{l_t + l s_0}(\lambda)(S_{l_t + l s_0 + (l_s + k s_0 - l_t - l s_0)}(\lambda_s)(y)) \bigg).
\end{align*}
As observed previously, the integers $l_s + k s_0,$ with $0 \leq k \leq l_s$ and $s \geq 1$ are pairwise distinct. Since $\lambda \in [\lambda_{t - 1}, \lambda_t],$ it follows from (I) and (II) that
\[
\Vert T^{l_t + l s_0}(\lambda)(z) - y \Vert
 < \Vert T^{l_t + l s_0}(\lambda)(x) \Vert
+ \Vert T^{l_t + l s_0}(\lambda)(S_{l_t + l s_0}(\lambda_t)(y)) - y \Vert
+ \frac{\varepsilon}{4} + \frac{\varepsilon}{4}.
\]
Moreover by definition, we have that $l_t + l s_0 \geq c = \max (N_0, 2 + s_0, M).$ Whence $l_t + l s_0 \geq N_0.$ From (IV) we then deduce that
\begin{equation}
\Vert T^{l_t + l s_0}(\lambda)(z) - y \Vert
< \frac{3}{4} \varepsilon
+ \Vert T^{l_t + l s_0}(\lambda)(S_{l_t + l s_0}(\lambda_t)(y)) - y \Vert.
\label{eq:Common_UFHC_criterion}
\end{equation}
On the other hand, since $\lambda \in [\lambda_{t - 1}, \lambda_t],$ we have that
\[
0
\leq \lambda_t - \lambda
\leq \lambda_t - \lambda_{t - 1}
\leq d_{c^{t + 1}}
= d_{l_{t + 1}}.
\]
But the sequence $(d_n)_{n \geq 1}$ is decreasing and $l_t + l s_0 < l_{t + 1}.$ This then entails that
\[
0
\leq \lambda_t - \lambda
\leq d_{l_t + l s_0}.
\]
So by (a) we have that
\[
\Vert T^{l_t + l s_0}(\lambda)(S_{l_t + l s_0}(\lambda_t)(y)) - y \Vert
\leq \frac{\varepsilon}{4}.
\]
Together with \eqref{eq:Common_UFHC_criterion} we conclude that
\[
\Vert T^{l_t + l s_0}(\lambda)(z) - y \Vert < \varepsilon.
\]
Altogether we have shown that $z \in U$ and 
\[
\forall\ 1 \leq t \leq \tau,
\ \forall\ \lambda \in [\lambda_{t - 1}, \lambda_t],
\ \forall\ 0 \leq l \leq l_t,
\ T^{l_t + l s_0}(\lambda)(z) \in V.
\]
In particular this implies that
\[
\forall\ 1 \leq t \leq \tau,
\ \forall\ \lambda \in [\lambda_{t - 1}, \lambda_t],
\ \frac{\# (N_\lambda(z, V) \cap [0, l_t + l_t s_0])}{l_t + l_t s_0 + 1}
> \frac{1}{2 + s_0}.
\]
As $l_1 \geq M,$ we have obtained that $z \in U$ and
\[
\forall\ \lambda \in [a, b],
\exists\ n \geq M,
\frac{\# (N_\lambda(z, V) \cap [0, n])}{n + 1}
> \frac{1}{2 + s_0},
\]
which ends the proof.
\end{proof}

\section{Application to weighted shifts}

In this section we give examples of families that have common upper frequently hypercyclic vectors. To this end we will consider families of weighted shifts. These operators form a rich source of examples in linear dynamics.

\begin{definition}
Let $p \geq 1$ and $w= (w_n)_{n \geq 1}$ be a sequence of nonzero scalars. The \textit{weighted shift} $B_w$ is the map on $l^p$ defined, for $x = (x_n)_{n \geq 0} \in l^p,$ by
\[
B_w(x) := (w_{n + 1} x_{n + 1})_{n \geq 0},
\]
where $l^p$ is the Banach space of sequences $(x_n)_{n \geq 0}$ such that
\[
\underset{n \geq 0}{\sum} \vert x_n \vert^p < + \infty.
\]
\end{definition}

We mention that a weighted shift $B_w$ is an operator on $l^p$ if and only if the sequence $w$ is bounded.

More generally we may define these operators on Fr\'echet spaces which possess an unconditional basis.

\begin{definition}
Let $X$ be a Fr\'echet space, $(e_n)_{n \geq 0}$ an unconditional basis of $X$ and $w = (w_n)_{n \geq 1}$ a sequence of nonzero scalars. The \textit{weighted shift} $B_w$ is the map on $X$ defined, for $x = \underset{n \geq 0}{\sum} x_n e_n \in X,$ by
\[
B_w (x) := \underset{n \geq 0}{\sum} w_{n + 1} x_{n + 1} e_n.
\]
Remark that this map is not necessarily well defined.  
\end{definition}

We derive from Theorem \ref{thm:Common_UFHC} the following result for families of weighted shifts. This result is based on a special case of a common hypercyclicity theorem for families of weighted shifts due to Bayart and Matheron \cite{BaMa-07}.

\begin{theorem}\label{thm:Common_UFHC_shifts}
Let $\Lambda$ be a real interval, $X$ a Fr\'echet space, $(e_n)_{n \geq 0}$ an unconditional basis of $X$ and $(w_n(\lambda))_{(n, \lambda) \in \mathbb{N} \times \Lambda}$ a family of positive numbers such that for any $\lambda \in \Lambda,\ B_{w(\lambda)}$ is an operator on $X.$ Suppose that
\begin{itemize}
\item[\textnormal{(i)}] for any $n \geq 1,$ the function $w_n: \Lambda \to \mathbb{R}: \lambda \mapsto w_n(\lambda)$ is increasing;
\item[\textnormal{(ii)}] for any $\lambda \in \Lambda,$ the series
\[
\underset{\nu \geq 1}{\sum}
\frac{1}{w_1(\lambda) \ldots w_\nu(\lambda)} e_\nu
\text{ converges in } X;
\]
\item[\textnormal{(iii)}] for any compact subinterval $K$ of $\Lambda$ and any $n \in \mathbb{N},$ there exists some $L_n(K) > 0$ such that
\[
\forall\ \lambda, \mu \in K,
\ \vert
\log (w_n(\lambda)) - \log (w_n(\mu))
\vert
\leq L_n(K) \vert \lambda - \mu \vert;
\]
\item[\textnormal{(iv)}] for any compact subinterval $K$ of $\Lambda$ and any $s \in \mathbb{N},$ the series
\[
\underset{t \geq 1}{\sum} 
\bigg(
\underset{i = 1}{\overset{s^t}{\sum\,}} L_i(K)
\bigg)^{-1}
\text{ diverges}.
\]
\end{itemize}
Then the set of common upper frequently hypercyclic vectors for $(B_{w(\lambda)})_{\lambda \in \Lambda}$ is residual in $X,$ and in particular, non-empty.
\end{theorem}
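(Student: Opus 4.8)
The plan is to verify that the family $(B_{w(\lambda)})_{\lambda \in \Lambda}$ satisfies the four hypotheses of Theorem~\ref{thm:Common_UFHC}; the conclusion then follows immediately. I fix a compact subinterval $K = [a,b] \subset \Lambda$, take $X_0$ to be the (dense) linear span of the basis vectors $(e_n)_{n \geq 0}$, and define $S_n(\lambda) : X_0 \to X$ on the basis by
\[
S_0(\lambda) = \mathrm{Id}, \qquad S_n(\lambda)(e_p) = \frac{1}{w_{p+1}(\lambda) \cdots w_{p+n}(\lambda)}\, e_{p+n} \quad (n \geq 1),
\]
extended linearly; these are the canonical right inverses of $B_{w(\lambda)}^n$ on $X_0$, namely $B_{w(\lambda)}^n S_n(\lambda) = \mathrm{Id}$ there. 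Since $X$ carries a Schauder basis it is separable, and before invoking the criterion I would record that $(B_{w(\lambda)})_\lambda$ is a continuous family: the weights are continuous in $\lambda$ by (iii), and for fixed $x = \sum_n x_n e_n$ the series $\sum_n w_{n+1}(\lambda) x_{n+1} e_n$ is dominated on $[a,b]$ by $c_n := w_{n+1}(b)\,|x_{n+1}|$, where $\sum_n c_n e_n$ converges unconditionally (its coefficients are the moduli of those of $B_{w(b)}(x)$ and the basis is unconditional), so Remark~\ref{rem} yields uniform convergence on $[a,b]$ of continuous terms and hence continuity of $\lambda \mapsto B_{w(\lambda)}(x)$.

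Conditions (i) and (iv) are essentially free. Since $B_{w(\lambda)}^n(e_p) = 0$ as soon as $n > p$, for $x \in X_0$ supported in $\{0,\dots,K_0\}$ the sequence $T^n(\lambda)(x)$ vanishes for $n > K_0$ uniformly in $\lambda$, giving (iv); and in the finite sum of (i) the term of index $n$ vanishes unless $n \leq K_0$, so the choice $N_0 > K_0$ makes every admissible tail sum zero. For (ii) I would compute, for $x = e_p$,
\[
T^m(\lambda)\big(S_{m+n}(\mu_n)(e_p)\big) = \Big( \prod_{j=p+n+1}^{p+m+n} \frac{w_j(\lambda)}{w_j(\mu_n)} \Big)\, \frac{1}{\prod_{j=p+1}^{p+n} w_j(\mu_n)}\; e_{p+n},
\]
and since $\lambda \leq \mu_0 \leq \mu_n$ with each $w_j$ increasing, the bracketed product is $\leq 1$, so the coefficient of $e_{p+n}$ is bounded by $c_n := \big(\prod_{j=p+1}^{p+n} w_j(a)\big)^{-1}$, uniformly in $\lambda$, $m$ and the $\mu_n$. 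Up to the constant $w_1(a)\cdots w_p(a)$, the series $\sum_n c_n e_{p+n}$ is the tail $\sum_{\nu > p}\big(w_1(a)\cdots w_\nu(a)\big)^{-1} e_\nu$, which converges by hypothesis (ii) and hence unconditionally; Remark~\ref{rem} then delivers the unconditional and uniform convergence required in (ii), and general $x \in X_0$ follows by linearity.

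The heart of the matter is (iii). A direct computation gives, for $x = e_p$ and $\lambda \leq \mu$,
\[
T^n(\lambda)\big(S_n(\mu)(e_p)\big) - e_p = \Big( \prod_{j=p+1}^{p+n} \frac{w_j(\lambda)}{w_j(\mu)} - 1 \Big) e_p,
\]
and passing to logarithms together with hypothesis (iii) bounds the deviation by
\[
\Big| \log \prod_{j=p+1}^{p+n} \frac{w_j(\lambda)}{w_j(\mu)} \Big| \leq (\mu - \lambda) \sum_{j=p+1}^{p+n} L_j(K) \leq (\mu - \lambda)\,\Sigma_{K_0 + n}, \qquad \Sigma_m := \sum_{i=1}^m L_i(K),
\]
for $x$ supported in $\{0,\dots,K_0\}$. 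Given $\varepsilon$, I choose $\eta > 0$ small enough (possible since the support is finite) that $\big\|\sum_p x_p \theta_p e_p\big\| \leq \varepsilon$ whenever all $|\theta_p| \leq \eta$, and set $d_n := \eta / \Sigma_{K_0+n}$. This sequence is positive and decreasing because $L_i(K) > 0$, and $0 \leq \mu - \lambda \leq d_n$ forces the logarithm above to be $\leq \eta$ in modulus; as the products are $\leq 1$ this yields $|\prod - 1| \leq \eta$ and hence $\|T^n(\lambda)S_n(\mu)(x) - x\| \leq \varepsilon$, which is (a).

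The step I expect to require the most care is the divergence requirement (iii)(b): the criterion demands $\sum_t d_{c^t} = \eta\sum_t \Sigma_{K_0+c^t}^{-1}$ to diverge for \emph{every} $c \in \mathbb{N}$, whereas hypothesis (iv) only supplies divergence of $\sum_t \Sigma_{s^t}^{-1}$. For $c \geq 2$ one has $K_0 + c^t \leq c^{t+1}$ for all large $t$, so by monotonicity of $\Sigma$ the terms satisfy $d_{c^t} \geq \eta\,\Sigma_{c^{t+1}}^{-1}$, and $\sum_t \Sigma_{c^{t+1}}^{-1}$ diverges as a tail of the series in hypothesis (iv) taken with $s = c$; the case $c = 1$ is trivial. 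Thus the support offset $K_0$ and the index shift from $c^t$ to $c^{t+1}$ are absorbed without destroying divergence. With (i)--(iv) verified, Theorem~\ref{thm:Common_UFHC} applies and produces a residual set of common upper frequently hypercyclic vectors for $(B_{w(\lambda)})_{\lambda \in \Lambda}$.
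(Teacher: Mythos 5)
Your proposal is correct and follows essentially the same route as the paper: the same choice of $X_0$ and of the right inverses $S_n(\lambda)$, the same domination arguments via Remark~\ref{rem} for continuity and for conditions (i), (ii), (iv), and the same logarithmic estimate with $d_n := \eta\big(\sum_{i=1}^{n+J} L_i(K)\big)^{-1}$ for condition (iii). Your explicit verification that the offset $J$ (your $K_0$) and the shift from $c^t$ to $c^{t+1}$ do not destroy the divergence in (iii)(b) is a welcome elaboration of a step the paper dispatches with ``directly gives (b)''.
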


\begin{proof}
We will apply Theorem \ref{thm:Common_UFHC}. To this end we must prove that the family $(B_{w(\lambda)})_{\lambda \in \Lambda}$ satisfies the hypotheses of this result. We begin by proving that $(B_{w(\lambda)})_{\lambda \in \Lambda}$ is a continuous family of operators on $X.$ Let $x \in X$ and $K = [a, b]$ be a compact subinterval of $\Lambda.$ As $B_{w(b)}$ is an operator on $X$ and $(e_n)_{n \geq 0}$ is an unconditional basis of $X,$ the series
\[
\underset{n \geq 0}{\sum} w_{n + 1}(b) x_{n + 1} e_n
\]
converges unconditionally. Together with (i) this entails by Remark \ref{rem} that the series
\[
\underset{n \geq 0}{\sum}
w_{n + 1}(\lambda) x_{n + 1} e_n
\]
converges unconditionally and uniformly for $\lambda \in K.$ Using the continuity of maps $\Lambda \to \mathbb{C} : \lambda \mapsto w_n(\lambda),\ n \geq 0,$ we then conclude easily the continuity of the map $\Lambda \to X : \lambda \mapsto B_{w(\lambda)}(x)$ on $K.$ This yields that $(B_{w(\lambda)})_{\lambda \in \Lambda}$ is continuous.

Now we care about the main conditions of Theorem \ref{thm:Common_UFHC}. We fix $K = [a, b]$ a compact subinterval of $\Lambda.$ We take $X_0 := \text{span} \lbrace e_\nu \mid \nu \geq 0 \rbrace$. We know that this space is dense in $X.$ We also consider for any $\lambda \in K,$ the map $F(\lambda)$ on $X_0$ defined by
\[
\forall\ x = \underset{\nu = 0}{\overset{J}{\sum}} x_\nu e_\nu \in X_0,
\ F(\lambda)(x)
= \underset{\nu = 0}{\overset{J}{\sum}}
\frac{1}{w_{\nu + 1}(\lambda)} x_\nu e_{\nu + 1}.
\]
We then take for any $\lambda \in K$ and any $n \geq 0,\ S_n(\lambda) := F^n(\lambda).$ So we have to show that for any $x \in X_0,$ the assertions (I), (II), (III) and (IV) are satisfied.
We observe that for the assertions (I), (II) and (IV) it is sufficient to prove them for $e_\nu,\ \nu \geq 0.$ Firstly we remark that for any nonnegative integer $\nu,$
\begin{equation}
\forall\ \lambda \in K,
\ \forall\ n \geq 0,
\ B^n_{w(\lambda)}(e_\nu)
= \begin{cases}
w_{\nu - n + 1}(\lambda) \ldots w_\nu(\lambda) e_{\nu - n}
& \text{ if } \nu \geq n,\\
0 & \text{ else}.
\end{cases}
\label{eq:Common_UFHC_shifts_1}
\end{equation}
By definition of $(F(\lambda))_{\lambda \in K},$ we also obtain that for any nonnegative integer $\nu,$
\begin{equation}
\forall\ \lambda \in K,
\ \forall\ n \geq 0,
\ F^n(\lambda)(e_\nu)
= \frac{1}{w_{\nu + 1}(\lambda) \ldots w_{\nu + n}(\lambda)} e_{\nu + n}.
\label{eq:Common_UFHC_shifts_2}
\end{equation}
We immediately conclude from \eqref{eq:Common_UFHC_shifts_1} that the assertion (IV) is satisfied for each $e_\nu,\, \nu \geq 0.$ Now we care about the assertions (I) and (II). Let $\nu \geq 0.$ It follows from \eqref{eq:Common_UFHC_shifts_1} and \eqref{eq:Common_UFHC_shifts_2} that
\[
\forall\ \lambda, \mu \in K,
\ \forall\ m \geq n > \nu,
\ B^m_{w(\lambda)}(F^{m - n}(\mu)(e_\nu)) = 0,
\]
which implies (I). Let $m \geq 0,\ (\mu_n)_{n \geq 0}$ be an non-decreasing sequence  from $K$ and $\lambda \leq \mu_0 \in K.$ By \eqref{eq:Common_UFHC_shifts_1} and \eqref{eq:Common_UFHC_shifts_2} we have that
\[
\forall\ n \geq 0,
\ B^m_{w(\lambda)}(F^{m + n}(\mu_n)(e_\nu))
= \frac{w_{\nu + n + 1}(\lambda) \ldots w_{\nu + m + n}(\lambda)}
{w_{\nu + 1}(\mu_n) \ldots w_{\nu + m + n}(\mu_n)}
e_{\nu + n}.
\]
Thus we obtain that 
\begin{equation}
\underset{n \geq 0}{\sum}
B^m_{w(\lambda)}(F^{m + n}(\mu_n)(e_\nu))
= \underset{n \geq 0}{\sum}
\frac{w_{\nu + n + 1}(\lambda) \ldots w_{\nu + m + n}(\lambda)}
{w_{\nu + 1}(\mu_n) \ldots w_{\nu + m + n}(\mu_n)}
e_{\nu + n}.
\label{eq:Common_UFHC_shifts_3}
\end{equation}
Furthermore since $K = [a, b],$ we have by the hypothesis (i) that
\begin{equation}
\forall\ n \geq 0,
\ \Big\vert
\frac{w_{\nu + n + 1}(\lambda) \ldots w_{\nu + m + n}(\lambda)}
{w_{\nu + 1}(\mu_n) \ldots w_{\nu + m + n}(\mu_n)}
\Big\vert
\leq \frac{1}{w_{\nu + 1}(a) \ldots w_{\nu + n}(a)}.
\label{eq:Common_UFHC_shifts_4}
\end{equation}
However by the hypothesis (ii) the series
\[
\underset{n \geq 0}{\sum} \frac{1}{w_{\nu + 1}(a) \ldots w_{\nu + n}(a)} e_{\nu + n}
\text{ converges in } X. 
\]
Combining this with \eqref{eq:Common_UFHC_shifts_3} and \eqref{eq:Common_UFHC_shifts_4} we deduce from Remark \ref{rem} the assertion (II). Finally it remains to show the assertion (III) for each $x \in X_0.$ Let $x \in X_0.$ So, by definition of $X_0,$ there exists some $J \geq 0$ and scalars $x_0, \ldots, x_J$ such that
\[
x := \underset{\nu = 0}{\overset{J}{\sum}} x_\nu e_\nu.
\]
Let $\varepsilon > 0.$ By the hypothesis (iii), there exists some sequence $(L_n)_{n \geq 1}$ of positive numbers such that
\begin{equation}
\forall\ n \geq 1,
\ \forall\ \lambda, \mu \in K,
\ \vert \log (w_n(\lambda)) - \log (w_n(\mu)) \vert
\leq L_n \vert \lambda - \mu \vert.
\label{eq:Common_UFHC_shifts_5}
\end{equation}
On the other hand, by properties of $F$-norms, there exists some $\eta > 0$ such that
\begin{equation}
\forall\ \xi \in \mathbb{R},
\ \Big\lbrack
\vert \xi \vert \leq \eta
\Rightarrow
\Big(
 \forall\ 0 \leq \nu \leq J,
 \ \Vert \xi x_\nu e_\nu \Vert \leq \frac{\varepsilon}{J + 1}
\Big)
\Big\rbrack.
\label{eq:Common_UFHC_shifts_6}
\end{equation}
We take $(d_n)_{n \geq 1}$ the sequence of positive numbers given by
\[
\forall\ n \geq 1,
\ d_n :=
\eta
\bigg(
\underset{i = 1}{\overset{n + J}{\sum}}
L_i
\bigg)^{- 1}.
\]
Now we have to prove the assertions (a) and (b) of (III). By definition of $(d_n)_{n \geq 1},$ the hypothesis (iv) directly gives (b). Let $n \geq 1$ and $\lambda,  \mu \in K$ be such that $0 \leq \mu - \lambda \leq d_n.$ By \eqref{eq:Common_UFHC_shifts_1} and \eqref{eq:Common_UFHC_shifts_2} we obtain that
\begin{equation}
\Vert
B^n_{w(\lambda)}(F^n(\mu)(x)) - x
\Vert
\leq \underset{\nu = 0}{\overset{J}{\sum}}
\Big\Vert
\Big( 
 \frac{w_{\nu + 1}(\lambda) \ldots w_{\nu + n}(\lambda)}
  {w_{\nu + 1}(\mu) \ldots w_{\nu + n}(\mu)} - 1 
\Big)
x_\nu e_\nu
\Big\Vert.
\label{eq:Common_UFHC_shifts_7}
\end{equation}
Moreover as $\lambda \leq \mu$ the hypothesis (i) implies that
\[
\forall\ 0 \leq \nu \leq J,
\ \Big\vert
\frac{w_{\nu + 1}(\lambda) \ldots w_{\nu + n}(\lambda)}
  {w_{\nu + 1}(\mu) \ldots w_{\nu + n}(\mu)} - 1 
\Big\vert
= 1 - \frac{w_{\nu + 1}(\lambda) \ldots w_{\nu + n}(\lambda)}
  {w_{\nu + 1}(\mu) \ldots w_{\nu + n}(\mu)},
\]
which may be written equivalently as
\[
\forall\ 0 \leq \nu \leq J,
\ \Big\vert
\frac{w_{\nu + 1}(\lambda) \ldots w_{\nu + n}(\lambda)}
  {w_{\nu + 1}(\mu) \ldots w_{\nu + n}(\mu)} - 1 
\Big\vert
= 1 -
\expo^{
-\underset{i = 1}{\overset{n}{\sum}}
 \log(w_{\nu + i}(\mu)) -  \log(w_{\nu + i}(\lambda))
}.
\]
In other terms we have that
\[
\forall\ 0 \leq \nu \leq J,
\ \Big\vert
\frac{w_{\nu + 1}(\lambda) \ldots w_{\nu + n}(\lambda)}
  {w_{\nu + 1}(\mu) \ldots w_{\nu + n}(\mu)} - 1 
\Big\vert
= \int^0_{
-\underset{i = 1}{\overset{n}{\sum}}
 \log(w_{\nu + i}(\mu)) -  \log(w_{\nu + i}(\lambda))}
\expo^{\xi} \, \text{d}\xi.
\]
Whence we obtain that
\[
\forall\ 0 \leq \nu \leq J,
\ \Big\vert
\frac{w_{\nu + 1}(\lambda) \ldots w_{\nu + n}(\lambda)}
  {w_{\nu + 1}(\mu) \ldots w_{\nu + n}(\mu)} - 1 
\Big\vert
\leq \underset{i = 1}{\overset{n}{\sum}}
 \big(
  \log(w_{\nu + i}(\mu)) -  \log(w_{\nu + i}(\lambda))
 \big).
\]
Thus by \eqref{eq:Common_UFHC_shifts_5} this entails that
\[
\forall\ 0 \leq \nu \leq J,
\ \Big\vert
\frac{w_{\nu + 1}(\lambda) \ldots w_{\nu + n}(\lambda)}
  {w_{\nu + 1}(\mu) \ldots w_{\nu + n}(\mu)} - 1 
\Big\vert
\leq \bigg(\underset{i = 1}{\overset{n}{\sum}} L_{\nu + i} \bigg)
(\mu - \lambda).
\]
Since $0 \leq \mu - \lambda \leq d_n,$ it then follows from the definition of $d_n$ that
\[
\forall\ 0 \leq \nu \leq J,
\ \Big\vert
\frac{w_{\nu + 1}(\lambda) \ldots w_{\nu + n}(\lambda)}
  {w_{\nu + 1}(\mu) \ldots w_{\nu + n}(\mu)} - 1 
\Big\vert
\leq \eta.
\]
By \eqref{eq:Common_UFHC_shifts_6} and \eqref{eq:Common_UFHC_shifts_7} we then conclude that
\[
\Vert B^n_{w(\lambda)}(F^n(\mu)(x)) - x \Vert
\leq \varepsilon.
\]
Therefore the assertion (a) is satisfied. Altogether we have shown that Theorem~\ref{thm:Common_UFHC} can be applied to the family $(B_{w(\lambda)})_{\lambda \in \Lambda}.$ This implies the claim.
\end{proof}

\begin{corollary}\label{coro:Common_UFHC_Shift}
Let $\Lambda$ be a real interval, $X$ a Fr\'echet space, $(e_n)_{n \geq 0}$ an unconditional basis of $X$ and $(w_n(\lambda))_{(n, \lambda) \in \mathbb{N} \times \Lambda}$ a family of positive numbers such that for any $\lambda \in \Lambda,\ B_{w(\lambda)}$ is an operator on $X.$ Suppose that
\begin{itemize}
\item[\textnormal{(i)}] for any $n \geq 1,$ the function $w_n : \Lambda \to \mathbb{R}: \lambda \mapsto w_n(\lambda)$ is increasing;
\item[\textnormal{(ii)}] for any $\lambda \in \Lambda,$ the series
\[
\underset{\nu \geq 1}{\sum} \frac{1}{w_1(\lambda) \ldots w_\nu(\lambda)} e_\nu
\text{ converges in } X;
\]
\item[\textnormal{(iii)}] for any compact subinterval $K$ of $\Lambda,$ there exists some sequence of positive numbers $(L_n(K))_{n \geq 1}$ such that
\begin{itemize}
\item[\textnormal{(a)}] the series $\underset{n \geq 1}{\sum} L_n(K)$ converges;
\item[\textnormal{(b)}] for any $n \geq 1$ and any $\lambda, \mu \in K,$
\[
\vert \log (w_n(\lambda)) - \log (w_n(\mu)) \vert
\leq L_n(K) \vert \lambda - \mu \vert.
\]
\end{itemize} 
\end{itemize}
Then the set of common upper frequently hypercyclic vectors for $(B_{w(\lambda)})_{\lambda \in \Lambda}$ is residual in $X$, and in particular, non-empty.
\end{corollary}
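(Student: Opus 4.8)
The plan is to deduce this corollary directly from Theorem~\ref{thm:Common_UFHC_shifts} by checking that its hypotheses follow from those of the corollary. Hypotheses (i) and (ii) are literally the same in both statements, so nothing needs to be done there. Fixing a compact subinterval $K$ of $\Lambda$ and taking the sequence $(L_n(K))_{n \geq 1}$ furnished by hypothesis (iii) of the corollary, part (b) is exactly hypothesis (iii) of the theorem. Hence the only genuine task is to verify that hypothesis (iv) of the theorem holds for this choice of $(L_n(K))_{n \geq 1}$.

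To this end I would exploit the convergence of $\sum_{n \geq 1} L_n(K)$ granted by (iii)(a). Setting $S := \sum_{n \geq 1} L_n(K) < +\infty$, every partial sum satisfies $\sum_{i = 1}^{N} L_i(K) \leq S$ since the $L_i(K)$ are positive. In particular, for any $s \in \mathbb{N}$ and any $t \geq 1$,
\[
\bigg( \underset{i = 1}{\overset{s^t}{\sum}} L_i(K) \bigg)^{-1}
\geq \frac{1}{S} > 0.
\]
Thus the terms of the series $\underset{t \geq 1}{\sum} \big( \sum_{i = 1}^{s^t} L_i(K) \big)^{-1}$ are bounded below by the fixed positive constant $1/S$ and therefore do not tend to $0$. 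Consequently the series diverges, which is precisely hypothesis (iv) of the theorem.

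With (i)--(iv) of Theorem~\ref{thm:Common_UFHC_shifts} established, applying that theorem yields that the set of common upper frequently hypercyclic vectors for $(B_{w(\lambda)})_{\lambda \in \Lambda}$ is residual in $X,$ hence non-empty. I expect no real obstacle here: the one point worth isolating is that the summability of $(L_n(K))_{n \geq 1}$ forces the denominators $\sum_{i = 1}^{s^t} L_i(K)$ to be uniformly bounded, so their reciprocals stay away from zero and the divergence required in (iv) becomes automatic, irrespective of the value of $s.$
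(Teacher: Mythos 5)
Your proposal is correct and matches the paper's (one-line) proof, which simply invokes Theorem~\ref{thm:Common_UFHC_shifts}; you have merely spelled out the only nontrivial detail, namely that summability of $(L_n(K))_{n \geq 1}$ bounds the partial sums $\sum_{i=1}^{s^t} L_i(K)$ by a finite constant $S$, so the terms $\big(\sum_{i=1}^{s^t} L_i(K)\big)^{-1} \geq 1/S > 0$ cannot tend to $0$ and the series in hypothesis (iv) diverges. This is exactly the intended deduction.
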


\begin{proof}
This follows directly from Theorem \ref{thm:Common_UFHC_shifts}.
\end{proof}

As an application of this corollary we obtain the proposition below which gives us a general form of some families with common upper frequently hypercyclic vectors.  

\begin{proposition}\label{prop:Common_UFHC_Shift}
Let $p \geq 1$ and $(a_n)_{n \geq 1},\ (b_n)_{n \geq 1}$ two sequences of nonnegative numbers. We consider the family of sequences $(w(\lambda))_{\lambda \in \mathbb{R}}$ defined by
\[
\forall\ \lambda \in \mathbb{R},
\ w(\lambda) := (a_n \expo^{\lambda b_n})_{n \geq 1}.
\]
Suppose that
\begin{itemize}
\item[\textnormal{(I)}] the sequence $(a_n)_{n \geq 1}$ is a bounded sequence of positive numbers;
\item[\textnormal{(II)}] the series $\underset{n \geq 1}{\sum} b_n$ and $\underset{n \geq 1}{\sum} \Big(\underset{i = 1}{\overset{n}{\prod}} a_i \Big)^{-p}$ converge.
\end{itemize}
Then the set of common upper frequently hypercyclic vectors for $(B_{w(\lambda)})_{\lambda \in \Lambda}$ is residual in $l^p$, and in particular, non-empty.
\end{proposition}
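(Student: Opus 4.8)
The plan is to verify that the family $(w(\lambda))_{\lambda \in \mathbb{R}}$ satisfies the four hypotheses of Corollary~\ref{coro:Common_UFHC_Shift}, taking $X = l^p$ with its canonical unconditional basis $(e_n)_{n \geq 0}$ and $\Lambda = \mathbb{R}$. The whole proof is a matter of translating the stated conditions (I) and (II) into the hypotheses (i)--(iii) of the corollary, so I would proceed hypothesis by hypothesis.

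First, for each $\lambda \in \mathbb{R}$ the weight sequence is $w_n(\lambda) = a_n \expo^{\lambda b_n}$. Since each $a_n > 0$ and $b_n \geq 0$, each $w_n(\lambda)$ is positive, and I would begin by checking that $B_{w(\lambda)}$ is a bounded operator on $l^p$: by the remark following the definition of weighted shifts this amounts to showing $(w_n(\lambda))_{n \geq 1}$ is bounded, which follows from boundedness of $(a_n)$ in (I) together with the fact that $b_n \to 0$ (a consequence of the convergence of $\sum b_n$ in (II)), so that $\expo^{\lambda b_n} \to 1$ and the product stays bounded. For hypothesis (i), the map $\lambda \mapsto a_n \expo^{\lambda b_n}$ is increasing because $b_n \geq 0$. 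For hypothesis (ii), I would compute $w_1(\lambda) \cdots w_\nu(\lambda) = \big(\prod_{i=1}^\nu a_i\big)\expo^{\lambda \sum_{i=1}^\nu b_i}$; since $\sum b_i$ converges, the exponential factor is bounded between two positive constants depending only on $\lambda$, so the convergence in $l^p$ of $\sum_\nu \big(w_1(\lambda)\cdots w_\nu(\lambda)\big)^{-1} e_\nu$ is equivalent to the convergence of $\sum_\nu \big(\prod_{i=1}^\nu a_i\big)^{-p}$, which is exactly the second series in (II).

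The decisive step is hypothesis (iii). Here the choice $L_n(K) := L_n = |K| \, b_n$ (where $|K|$ is the length of the compact subinterval $K$, or simply $L_n = b_n$ after absorbing the constant, since (iii)(b) allows the Lipschitz bound to hold on all of $K$) is forced by the explicit form of the weights. Indeed $\log w_n(\lambda) = \log a_n + \lambda b_n$, so $|\log w_n(\lambda) - \log w_n(\mu)| = b_n |\lambda - \mu|$, giving (iii)(b) with $L_n(K) = b_n$ exactly, independent of $K$. Then (iii)(a) requires $\sum_n L_n(K) = \sum_n b_n$ to converge, which is precisely the first series in (II). I would remark that this is the cleanest possible situation: because the $\lambda$-dependence enters only through a single linear exponential term, the log-Lipschitz constant is exactly $b_n$ with no slack, and the summability condition on the $L_n$ collapses to the summability of $(b_n)$ already assumed.

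I do not expect any genuine obstacle in this proof, since the exponential parametrization makes every hypothesis of the corollary transparent; the only minor care needed is in hypothesis (ii), where one must separate the constant-in-$n$ but $\lambda$-dependent exponential prefactor $\expo^{\lambda \sum b_i}$ from the decisive factor $\big(\prod a_i\big)^{-1}$ before invoking the $l^p$ convergence criterion. Having verified (i), (ii) and (iii), Corollary~\ref{coro:Common_UFHC_Shift} applies directly and yields that the set of common upper frequently hypercyclic vectors for $(B_{w(\lambda)})_{\lambda \in \mathbb{R}}$ is residual in $l^p$, hence non-empty.
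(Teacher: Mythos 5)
Your proof is correct and follows exactly the route the paper intends: the proposition is presented as a direct application of Corollary~\ref{coro:Common_UFHC_Shift}, and your verification of hypotheses (i)--(iii) (boundedness of the weights, the factorization $w_1(\lambda)\cdots w_\nu(\lambda) = \bigl(\prod_{i=1}^{\nu} a_i\bigr)\expo^{\lambda\sum_{i=1}^{\nu} b_i}$ with the exponential factor pinched between positive constants, and the exact log-Lipschitz constant $b_n$) is precisely the computation the paper leaves to the reader. The only cosmetic point is that the corollary asks for \emph{positive} constants $L_n(K)$ while some $b_n$ may vanish; taking $L_n(K) := b_n + 2^{-n}$ repairs this without affecting the summability in (iii)(a).
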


We finally give an example which cannot be obtained with Proposition \ref{prop:Common_UFHC_Shift}. This actually illustrates the fact that the hypotheses of Corollary \ref{coro:Common_UFHC_Shift} are stronger than those of Theorem \ref{thm:Common_UFHC_shifts}. 

\begin{example}
Let $p \geq 1.$ We consider the family of sequences $(w(\lambda))_{\lambda > \frac{1}{p}}$ defined by
\[
\forall\ \lambda > \frac{1}{p},
\ w(\lambda) := 
\Big(\Big(
\frac{n + 1}{n}
\Big)^\lambda
\Big)_{n \geq 1}.
\]
We observe that for any $n \geq 1$ and any $\lambda, \mu > \frac{1}{p},$
\[
\vert
\log (w_n(\lambda)) - \log (w_n(\mu))
\vert
= \log \Big(\frac{n + 1}{n} \Big) \vert \lambda - \mu \vert.
\]
By taking $(L_n(K))_{n \geq 1} := \big(\log \big(\frac{n + 1}{n} \big)\big)_{n \geq 1},$ we deduce from Theorem \ref{thm:Common_UFHC_shifts} that the set of common upper frequently hypercyclic vectors for $(B_{w(\lambda)})_{\lambda > \frac{1}{p}}$ is residual in $l^p.$ 
\end{example}

\section{Common $\mathcal{A}$-hypercyclicity}

We end this paper with a generalization to $\mathcal{A}$-hypercyclicity. This notion, created by B\`es, Menet, Peris and Puig in \cite{BMPP-14}, generalizes hypercyclicity and upper frequent hypercyclicity. To define this we consider Furstenberg families.

\begin{definition}
Let $\mathcal{A}$ be a non-empty family of subsets of $\mathbb{N}_0.$ We say that $\mathcal{A}$ is a \textit{Furstenberg family} if it is \textit{hereditary upward}, that is, if 
\[
\forall\ A \in \mathcal{A},
\ \forall\ B \subset \mathbb{N}_0,
\ (A \subset B \Rightarrow B  \in \mathcal{A}).
\]
Moreover a Furstenberg family $\mathcal{A}$ is called \textit{upper} if $\mathcal{A}$ does not contain the empty set and if there exists some arbitrary set $D,$ some countable set $M$ and some family $(A_{\delta, \mu})_{(\delta, \mu) \in D \times M}$  of subsets of $\mathbb{N}_0$ such that $\mathcal{A}$ can be written as
\[
\mathcal{A} = \underset{\delta \in D}{\bigcup} \mathcal{A}_\delta
\text{ with } \mathcal{A_\delta} := \underset{\mu \in M}{\bigcap} A_{\delta, \mu} 
\]
and satisfying the following properties:
\begin{itemize}
\item[\textnormal{(i)}] $\mathcal{A}$ is \textit{uniformly left-invariant}, that is,
\[
\forall\ A \in \mathcal{A},
\ \exists\ \delta \in D,
\ \forall\ k \in \mathbb{N}_0,
\ (A - k) \cap \mathbb{N}_0 \in \mathcal{A}_\delta;
\]
\item[\textnormal{(ii)}] for any $\delta \in D$ and any $\mu \in M,$ the family $A_{\delta, \mu}$ is \textit{finitely hereditary upward}, that is,
\[
\forall\ A \in \mathcal{A}_{\delta, \mu},
\ \exists\ F \subset \mathbb{N}_0 \text{ finite},
\ \forall\ B \subset \mathbb{N}_0,
\ ( A \cap F \subset B \Rightarrow B \in A_{\delta, \mu}).
\]
\end{itemize}
\end{definition}

\begin{definition}
Let $X$ be a Fr\'echet space, $T$ an operator on $X$ and $\mathcal{A}$ a Furstenberg family. We say that $T$ is \textit{$\mathcal{A}$-hypercyclic} on $X$ if there exists some vector $x \in X$ such that
\[
\text{for any non-empty open subset } V \text{ of } X,
\ \lbrace n \geq 0 \mid T^n(x) \in V \rbrace \in \mathcal{A}.
\]
In this case, $x$ is called \textit{$\mathcal{A}$-hypercyclic} for $T.$ 
\end{definition}

Actually Bonilla and Grosse-Erdmann have obtained in \cite{BoGr-16} an analogue to the Birkhoff theorem for $\mathcal{A}$-hypercyclicity with $\mathcal{A}$ an upper Furstenberg family. Using similar arguments as in the proof of Theorem \ref{thm:Basic criterion}, we establish the following generalization of Theorem \ref{thm:Basic criterion} for common $\mathcal{A}$-hypercyclicity.

\begin{theorem}
Let $\mathcal{A}$ be an upper Furstenberg family written as
\[
\mathcal{A} :=
\underset{\delta \in D}{\bigcup}
\underset{\mu \in M}{\bigcap}
\mathcal{A}_{\delta, \mu}.
\]
Let $\Lambda$ be a $\sigma$-compact metric space, $X$ a separable Fr\'echet space and $(T(\lambda))_{\lambda \in \Lambda}$ a continuous family of operators on $X.$ Suppose that for any non-empty open subset $V$ of $X$ and for any compact subset $K$ of $\Lambda$ there exists some $\delta \in D$ such that, for any non-empty open subset $U$ of $X$ and any $\mu \in M,$ we have
\[
\exists\ x \in U,
\ \forall\ \lambda \in K,
\ N_\lambda(x, V) \in \mathcal{A}_{\delta, \mu}. 
\]
Then the set of common $\mathcal{A}$-hypercyclic vectors for $(T(\lambda))_{\lambda \in \Lambda}$ is residual in $X.$
\end{theorem}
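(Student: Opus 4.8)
The plan is to follow the scheme of the proof of Theorem~\ref{thm:Basic criterion}, replacing the density estimate by membership in the families $\mathcal{A}_{\delta, \mu}$ and invoking the structural properties of an upper Furstenberg family at exactly the two places where the upper frequent hypercyclicity proof used, respectively, the upward monotonicity of the counting function and the finiteness of the relevant visiting times. Concretely, I would first fix a countable base $(V_k)_{k \geq 1}$ of non-empty open subsets of $X$ (by separability) and a covering $\Lambda = \bigcup_{m \geq 1} K_m$ by compact sets (by $\sigma$-compactness). Applying the hypothesis to each pair $(V_k, K_m)$ produces some $\delta_{k,m} \in D$ --- crucially depending only on $k$ and $m$, not on $\mu$ --- such that for every $\mu \in M$ and every non-empty open $U$ there is $x \in U$ with $N_\lambda(x, V_k) \in \mathcal{A}_{\delta_{k,m}, \mu}$ for all $\lambda \in K_m$. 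For $k, m \geq 1$ and $\mu \in M$ I set
\[
E(k, m, \mu) := \{ x \in X \mid \forall\, \lambda \in K_m,\ N_\lambda(x, V_k) \in \mathcal{A}_{\delta_{k,m}, \mu} \},
\]
which is dense by the above. Since $M$ is countable and $k, m$ range over countable sets, it suffices by the Baire category theorem to prove that each $E(k, m, \mu)$ is open and that their intersection is contained in the set of common $\mathcal{A}$-hypercyclic vectors.

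The inclusion is the easy half. If $x$ lies in $\bigcap_{k, m} \bigcap_{\mu \in M} E(k, m, \mu)$, then for fixed $k, m$ and every $\lambda \in K_m$ the set $N_\lambda(x, V_k)$ belongs to $\mathcal{A}_{\delta_{k,m}, \mu}$ for \emph{all} $\mu \in M$, hence to $\bigcap_{\mu \in M} \mathcal{A}_{\delta_{k,m}, \mu} = \mathcal{A}_{\delta_{k,m}} \subset \mathcal{A}$; the fact that $\delta_{k,m}$ is independent of $\mu$ is precisely what makes this intersection land inside a single $\mathcal{A}_\delta$. As $(K_m)_m$ covers $\Lambda$ this holds for all $\lambda \in \Lambda$, and since $(V_k)_k$ is a base, for an arbitrary non-empty open $V$ one picks $V_k \subset V$, so that $N_\lambda(x, V_k) \subset N_\lambda(x, V)$ and the \emph{hereditary upward} property of $\mathcal{A}$ yields $N_\lambda(x, V) \in \mathcal{A}$; thus $x$ is common $\mathcal{A}$-hypercyclic.

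The openness of $E(k, m, \mu)$ is where the work lies, and here the \emph{finitely hereditary upward} property of $\mathcal{A}_{\delta_{k,m}, \mu}$ plays exactly the role that the truncation $N_\lambda(x, V_k) \cap [0, n_\lambda]$ together with the monotonicity of the density played in Theorem~\ref{thm:Basic criterion}. Fix $x \in E(k, m, \mu)$. For each $\lambda \in K_m$, since $N_\lambda(x, V_k) \in \mathcal{A}_{\delta_{k,m}, \mu}$, this property furnishes a \emph{finite} set $F_\lambda \subset \mathbb{N}_0$ such that any $B \subset \mathbb{N}_0$ with $N_\lambda(x, V_k) \cap F_\lambda \subset B$ already lies in $\mathcal{A}_{\delta_{k,m}, \mu}$. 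I would then run the continuity-and-compactness argument of Theorem~\ref{thm:Basic criterion} verbatim on the finite index set $N_\lambda(x, V_k) \cap F_\lambda$ in place of $N_\lambda(x, V_k) \cap [0, n_\lambda]$: for each of its finitely many elements $n$ one uses Proposition~\ref{prop:continuous_family} and Corollary~\ref{coro:continuous_family} to produce neighbourhoods $U_\lambda$ of $\lambda$ and $O_\lambda$ of $x$ on which $T^n(\nu)(y) \in V_k$, so that $N_\lambda(x, V_k) \cap F_\lambda \subset N_\nu(y, V_k)$ for all $\nu \in U_\lambda$ and $y \in O_\lambda$; compactness of $K_m$ extracts a finite subcover $U_{\lambda_1}, \ldots, U_{\lambda_I}$, and $O := \bigcap_{i=1}^I O_{\lambda_i}$ is the sought neighbourhood.

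Indeed, for $y \in O$ and any $\nu \in K_m$ one has $\nu \in U_{\lambda_i}$ for some $i$, whence $N_{\lambda_i}(x, V_k) \cap F_{\lambda_i} \subset N_\nu(y, V_k)$, and finite hereditary upwardness forces $N_\nu(y, V_k) \in \mathcal{A}_{\delta_{k,m}, \mu}$, i.e. $y \in E(k, m, \mu)$; applying the Baire category theorem then finishes the proof. I expect the topological core to be unchanged from Theorem~\ref{thm:Basic criterion}, so the only genuinely new points --- and the place where care is needed --- are the bookkeeping that keeps $\delta_{k,m}$ independent of $\mu \in M$ (without which the intersection over $\mu$ would not fall into one $\mathcal{A}_\delta$) and the recognition that finite hereditary upwardness is the exact structural substitute for the finite truncation plus upward monotonicity of the density used in the upper frequent hypercyclicity argument.
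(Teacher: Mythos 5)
Your proposal is correct and is exactly the adaptation the paper has in mind (the paper only sketches this proof by saying it uses "similar arguments" to Theorem~\ref{thm:Basic criterion}): countability of $M$ makes the Baire argument go through, the independence of $\delta_{k,m}$ from $\mu$ lets the intersection over $\mu$ land in a single $\mathcal{A}_{\delta_{k,m}}$, and finite hereditary upwardness is precisely the substitute for the truncation $N_\lambda(x,V_k)\cap[0,n_\lambda]$ in the openness argument. Nothing is missing.
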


This theorem can allow to adapt results of Sections 2 and 3 to common $\mathcal{A}$-hypercyclicity. So we can obtain common $\mathcal{A}$-hypercyclicity criteria for some families $\mathcal{A}$ (see \cite{M-2018}).

\bibliographystyle{plain}

\begin{thebibliography}{10}

\bibitem{AbGo-03}
E.~Abakumov and J.~Gordon.
\newblock Common hypercyclic vectors for multiples of backward shift.
\newblock {\em J. Funct. Anal.}, 200:494--504, 2003.

\bibitem{Bay-04}
F.~Bayart.
\newblock Common hypercyclic vectors for composition operators.
\newblock {\em J. Operator Theory}, 52:353--370, 2004.

\bibitem{BaGr-06}
F.~Bayart and S.~Grivaux.
\newblock Frequently hypercyclic operators.
\newblock {\em Trans. Amer. Math. Soc.}, 358(11):5083--5117, 2006.

\bibitem{BaMa-07}
F.~Bayart and E.~Matheron.
\newblock How to get common universal vectors.
\newblock {\em Indiana Univ. Math. J.}, 56:553--580, 2007.

\bibitem{BaMa-09}
F.~Bayart and E.~Matheron.
\newblock {\em Dynamics of linear operators}.
\newblock Cambridge University Press, Cambridge, 2009.

\bibitem{BaRu-15}
F.~Bayart and I.~Z. Ruzsa.
\newblock Difference sets and frequently hypercyclic weighted shifts.
\newblock {\em Ergodic Theory Dynam. Systems}, 35(03):691--709, 2015.

\bibitem{BMPP-14}
J.~B\`es, Q.~Menet, A.~Peris, and Y.~Puig.
\newblock Recurrence properties of hypercyclic operators.
\newblock {\em Math. Ann.}, 366:545--572, 2014.

\bibitem{Bir-22}
G.D. Birkhoff.
\newblock Surface transformations and their dynamical applications.
\newblock {\em Acta Math.}, 43(1):1--119, 1922.

\bibitem{BoGr-16}
A.~Bonilla and K.-G. Grosse-Erdmann.
\newblock Upper frequent hypercyclicity and related notions.
\newblock Rev. Mat. Complut., To appear.

\bibitem{CoSa-04-a}
G.~Costakis and M.~Sambarino.
\newblock Genericity of wild holomorphic functions and common hypercyclic
  vectors.
\newblock {\em Adv. Math.}, 182:278--306, 2004.

\bibitem{GrMa-14}
S.~Grivaux and E.~Matheron.
\newblock Invariant measures for frequently hypercyclic operators.
\newblock {\em Adv. Math.}, 265:371--427, 2014.

\bibitem{GrPe-11}
K.-G. Grosse-Erdmann and A.~Peris.
\newblock {\em Linear Chaos}.
\newblock Springer, London, 2011.

\bibitem{KPR-84}
N.J. Kalton, N.T. Peck, and J.W. Roberts.
\newblock {\em An $F$space sampler}.
\newblock Cambridge University Press, Cambridge, 1984.

\bibitem{M-2018}
M.~Mestiri.
\newblock {\em Common $\mathcal{A}$-hypercyclicity}.
\newblock Thesis, University of Mons, Mons, (In preparation).

\bibitem{Moo-13}
T.~S. Moothathu.
\newblock Two remarks on frequent hypercyclicity.
\newblock {\em J. Math. Anal. Appl.}, 408(2):843--845, 2013.

\bibitem{Sal-99}
H.N. Salas.
\newblock Supercyclic and weighted shifts.
\newblock {\em Studia Math.}, 135:55--74, 1999.

\bibitem{Shk-09}
S.~Shkarin.
\newblock On the spectrum of frequently hypercyclic operators.
\newblock {\em Proc. Amer. Math. Soc.}, 137(1):123--134, 2009.

\bibitem{Shk-10}
S.~Shkarin.
\newblock Remarks on common hypercyclic vectors.
\newblock {\em J. Funct. Anal.}, 258:132--160, 2010.
\end{thebibliography}

{\let\thefootnote\relax\footnote{{D\'epartement de Math\'ematique,
Institut Complexys, Universit\'e de Mons,
20 Place du Parc, 7000 Mons, Belgium}}}
{\let\thefootnote\relax\footnote{{E-mail address: monia.mestiri@umons.ac.be}}}

\end{document}